\documentclass[10pt]{article}
\usepackage{amssymb,latexsym,amsmath,epsfig,amsthm} 

\makeatletter

\renewcommand\section{\@startsection {section}{1}{\z@}
{-30pt \@plus -1ex \@minus -.2ex}
{2.3ex \@plus.2ex}
{\normalfont\normalsize\bfseries\boldmath}}

\renewcommand\subsection{\@startsection{subsection}{2}{\z@}
{-3.25ex\@plus -1ex \@minus -.2ex}
{1.5ex \@plus .2ex}
{\normalfont\normalsize\bfseries\boldmath}}

\renewcommand{\@seccntformat}[1]{\csname the#1\endcsname. }

\makeatother

\newtheorem{theorem}{Theorem}
\newtheorem{lemma}{Lemma}

\theoremstyle{definition}

\newtheorem{conjecture}{Conjecture}

\begin{document}
																												
\begin{center}
\uppercase{\bf \boldmath A Short Proof of a Conjecture Regarding Quadratic Representations of Practical Numbers}
\vskip 20pt
{\bf Ting Hon Stanford Li}\\
\end{center}

\centerline{\bf Abstract}
\noindent
A positive integer $n$ is called a practical number if every positive integer less than or equal to $n$ can be expressed as a sum of distinct positive divisors of $n$. In this paper, we study an open conjecture proposed by Wang and Sun concerning the quadratic representations of practical numbers. Specifically, we provide a short proof of the second part of the conjecture, demonstrating that for any positive integers $b$ and $c$ with $2 \nmid b$ and $2 \mid c$, there exists an integer $n$ satisfying $1 < n \le \max\{b, c\}$ such that $n^2 + bn + c$ is a practical number. Combined with the work of Somu, Li, and Kukla, this completely settles the conjecture of Wang and Sun.

\pagestyle{myheadings}
\thispagestyle{empty}
\baselineskip=12.875pt
\vskip 30pt

	\section{Introduction}

A positive integer $n$ is called a practical number if every positive integer less than or equal to $n$ can be expressed as a sum of distinct positive divisors of $n$. This concept was introduced by Srinivasan \cite{Srinivasan} and systematically studied by Stewart \cite{Stewart}.

Recently, the problem of quadratic representations of practical numbers has been studied by various researchers. For instance, Wang and Sun investigated the existence of practical numbers represented by various polynomial forms, including quadratic forms \cite{Wang}. In their paper, Wang and Sun also proposed a conjecture regarding these quadratic forms.

\begin{conjecture}\label{main}
Let $a, b, c$ be positive integers with $2 \nmid ab$ and $2 \mid c$. Then:
\begin{enumerate}
 \item there are infinitely many positive integers $n$ such that $an^2 + bn + c$ is practical; 
 \item in the case $a = 1$, there is an integer $n$ with $1 < n \le \max\{b, c\}$ such that $n^2 + bn + c$ is practical.
\end{enumerate}
\end{conjecture}

Somu, Li, and Kukla proved the first part of the conjecture \cite{Somu}. In this article, we provide a short proof for the second part of Wang and Sun's conjecture.

Throughout this paper, let $f(n) = n^2 + bn + c$, where $b, c \in \mathbb{Z}^+$ satisfy $2 \nmid b$ and $2 \mid c$. Furthermore, we define $m = \lfloor \log_2(\max\{b, c\}) \rfloor$.

	\section{Proof of the Main Result}
    
In order to prove the second part of Conjecture~\ref{main}, we rely on Hensel's lemma and a well-known property of practical numbers, which are presented below:

\begin{lemma}[{Hensel's Lemma, \cite[Theorem~2.23]{Niven}}]\label{hensel}
Suppose that $f(x)$ is a polynomial with integral coefficients. If $f(a) \equiv 0 \pmod{p^j}$ and $f'(a) \not\equiv 0 \pmod{p}$, then there is a unique $t \pmod{p}$ such that $f(a +tp^j) \equiv 0 \pmod{p^{j+1}}$.
\end{lemma}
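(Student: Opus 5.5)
The plan is to expand $f(a+tp^j)$ as a Taylor polynomial in $tp^j$. Everything of order $p^{2j}$ or higher then vanishes modulo $p^{j+1}$, which leaves a linear congruence in $t$ modulo $p$. I assume $j\ge 1$, as the statement implicitly requires.

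First, write $f(x)=\sum_{k=0}^{d} a_k x^k$ with $a_k\in\mathbb{Z}$. For any integer $h$, the binomial theorem gives
\[
f(x+h)=\sum_{i=0}^{d} g_i(x)\,h^i, \qquad g_i(x)=\sum_{k\ge i}\binom{k}{i}a_k x^{k-i}.
\]
Each $g_i$ has integer coefficients, with $g_0=f$ and $g_1=f'$. This integrality is the one point that needs care, since the usual Taylor coefficients are $f^{(i)}/i!$ and their denominators could a priori be a problem. Writing them through binomial coefficients disposes of the issue.

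Next, put $x=a$ and $h=tp^j$. Every term with $i\ge 2$ is divisible by $p^{2j}$, and $2j\ge j+1$ because $j\ge1$. Hence
\[
f(a+tp^j)\equiv f(a)+tp^j f'(a)\pmod{p^{j+1}}.
\]
Since $f(a)\equiv 0\pmod{p^j}$, write $f(a)=p^j s$ with $s\in\mathbb{Z}$. The right side becomes $p^j\bigl(s+tf'(a)\bigr)$. This is $0$ modulo $p^{j+1}$ exactly when
\[
s+tf'(a)\equiv 0\pmod p.
\]
Because $p\nmid f'(a)$, the residue of $f'(a)$ is invertible modulo $p$. So this linear congruence has exactly one solution, $t\equiv -s\,f'(a)^{-1}\pmod p$, which gives both existence and uniqueness of $t$ modulo $p$.

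None of these steps is a real obstacle. The only things to watch are that $j\ge1$, so that $p^{2j}$ is absorbed into $p^{j+1}$, and that the expansion coefficients $g_i$ are integers.
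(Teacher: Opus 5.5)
Your proof is correct and matches the standard one. The paper gives no proof of its own and cites Niven--Zuckerman--Montgomery, Theorem~2.23, whose proof is this same Taylor expansion of $f(a+tp^j)$ modulo $p^{j+1}$; there, integrality of the coefficients is phrased as $f^{(k)}(a)/k!\in\mathbb{Z}$, which your binomial form $g_k$ makes explicit, and you rightly note that the argument needs $j\ge 1$ and $p$ prime, so that $f'(a)$ is invertible modulo $p$.
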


\begin{lemma}[{\cite[Lemma~1]{Melfi}}]\label{practical_numbers2}
Let $\sigma(k)$ denote the sum of the positive divisors of $k$. If $k$ is a practical number and $n$ is an integer satisfying $1 \leq n \leq \sigma(k) + 1$, then the product $kn$ is also a practical number. In particular, $kn$ is practical for any integer $n$ such that $1 \leq n \leq 2k$.
\end{lemma}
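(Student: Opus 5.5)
The plan is a direct division-with-remainder argument, using the standard fact that if $k$ is practical then \emph{every} integer $0\le r\le\sigma(k)$ (not just $r\le k$) is a sum of distinct divisors of $k$. This fact comes from Stewart's characterization \cite{Stewart}. List the divisors of $k$ as $1=d_1<d_2<\dots<d_\ell=k$. Then $k$ is practical iff $d_{i+1}\le 1+\sum_{j\le i}d_j$ for all $i$. By induction on $i$, every integer in $[0,\sum_{j\le i}d_j]$ is then a sum of distinct elements of $\{d_1,\dots,d_i\}$. I will quote this from \cite{Stewart} rather than reprove it.

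Now let $k$ be practical, $1\le n\le\sigma(k)+1$, and $1\le m\le kn$. We must write $m$ as a sum of distinct divisors of $kn$. Divide by $n$: $m=qn+r$ with $0\le r\le n-1$ and $0\le q\le k$. We have $q\le k\le\sigma(k)$, so by the fact above $q=\sum_{d\in S}d$ for some set $S$ of distinct divisors of $k$. Then $qn=\sum_{d\in S}dn$, and each $dn$ is a divisor of $kn$. Also $r\le n-1\le\sigma(k)$, so $r=\sum_{e\in T}e$ for some set $T$ of distinct divisors of $k$, each of which also divides $kn$.

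The one point needing care is that the two families $\{dn:d\in S\}$ and $T$ do not overlap. This is the step I expect to be the only real obstacle, and it resolves by a size comparison. Every $e\in T$ satisfies $e\le r\le n-1<n$. Every $dn$ with $d\in S$ satisfies $dn\ge n$. So the two families are disjoint, and within each family the elements are distinct. Hence $m=\sum_{d\in S}dn+\sum_{e\in T}e$ is a sum of distinct divisors of $kn$, so $kn$ is practical. The degenerate cases $q=0$ or $r=0$ just use an empty set.

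For the ``in particular'' clause, it suffices to check $2k\le\sigma(k)+1$. If $k=1$ this reads $2\le 2$. If $k>1$, then $1$ and $k$ are distinct divisors, so $\sigma(k)\ge k+1$. A practical $k>1$ is even (otherwise $2$ would not be representable), so $k/2$ is also a divisor distinct from $1$ and $k$ when $k>2$. In every case $\sigma(k)+1\ge 2k$ follows (for $k=2$: $\sigma(2)+1=4=2k$). Hence every $n\le 2k$ is covered by the main statement.
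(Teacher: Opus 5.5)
Your main argument is correct and self-contained. Writing $m=qn+r$, you represent $q\le k$ and $r\le n-1\le\sigma(k)$ by distinct divisors of $k$, scale the first family by $n$, and separate the two families via $e\le r<n\le dn$; this proves that $kn$ is practical. The paper gives no proof of this lemma and only cites Melfi, so your argument supplies what the paper takes on citation.

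The gap is in the ``in particular'' clause. The three divisors $1$, $k/2$, $k$ only give $\sigma(k)+1\ge 2+3k/2$, and this is at least $2k$ only when $k\le 4$. For the practical number $k=8$, for example, your bound yields $\sigma(8)+1\ge 14$, which falls short of $2k=16$. So ``in every case $\sigma(k)+1\ge 2k$ follows'' is not justified for practical $k\ge 6$. Evenness alone cannot rescue it: $k=10$ is even but $\sigma(10)+1=19<20$. The inequality genuinely depends on practicality.

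The fix uses the tool you already quoted. For $k\ge 2$, apply Stewart's criterion at the last index:
\[
k=d_\ell\le 1+\sum_{j<\ell}d_j=1+\sigma(k)-k,
\]
which is exactly $2k\le\sigma(k)+1$. Equivalently, practicality forces $k-1$ to be a sum of distinct divisors of $k$, none of which can be $k$ itself, so the proper divisors of $k$ sum to at least $k-1$.

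For the paper's own applications, $k=2^j$, one has $\sigma(2^j)+1=2^{j+1}=2k$ exactly, so the clause is immediate there.
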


We then present two more lemmas which are useful for our proof.

\begin{lemma}\label{lemma1}
For all positive integers $m$, there exist exactly two distinct roots $n_1, n_2 \in \{0, 1, \dots, 2^m-1\}$ such that $f(n_1) \equiv 0 \pmod{2^m}$ and $f(n_2) \equiv 0 \pmod{2^m}$.
\end{lemma}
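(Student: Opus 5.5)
We argue by induction on $m$, using Lemma~\ref{hensel} with $p=2$ to lift roots from modulus $2^j$ to $2^{j+1}$. Throughout we use that $f'(x)=2x+b$ is odd for every integer $x$, since $2\nmid b$. Thus the hypothesis $f'(a)\not\equiv 0 \pmod 2$ of Lemma~\ref{hensel} holds automatically at every root.

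\emph{Base case $m=1$.} Here $f(n)=n^2+bn+c\equiv n^2+n = n(n+1)\equiv 0 \pmod 2$ for every $n$, because $b$ is odd and $c$ is even. Hence both residues $0$ and $1$ are roots modulo $2$, giving exactly two roots in $\{0,1\}$.

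\emph{Inductive step.} Suppose that modulo $2^j$ there are exactly two roots $r_1,r_2\in\{0,\dots,2^j-1\}$. We count the roots modulo $2^{j+1}$ in two directions.
\begin{enumerate}
 \item \textbf{Every root modulo $2^{j+1}$ comes from a lift.} If $x\in\{0,\dots,2^{j+1}-1\}$ satisfies $f(x)\equiv 0\pmod{2^{j+1}}$, then $f(x)\equiv 0 \pmod{2^j}$. So $x\equiv r_i \pmod{2^j}$ for some $i$, and therefore $x=r_i+t2^j$ with $t\in\{0,1\}$.
 \item \textbf{Each $r_i$ lifts exactly once.} By Lemma~\ref{hensel}, since $f'(r_i)$ is odd, there is a unique $t \pmod 2$ with $f(r_i+t2^j)\equiv 0\pmod{2^{j+1}}$. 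Hence each $r_i$ contributes exactly one root in $\{0,\dots,2^{j+1}-1\}$.
\end{enumerate}
The two lifted roots are distinct, because they reduce to the distinct residues $r_1\ne r_2$ modulo $2^j$. Consequently there are exactly two roots modulo $2^{j+1}$.

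The only point needing care is the uniqueness part of Lemma~\ref{hensel}, since that is what rules out a third root; the first counting step supplies the remaining half of the bijection. As a sanity check that does not rely on Hensel's lemma, one can compute directly:
\[
f(r+2^j)-f(r)=2^j(2r+b)+2^{2j}.
\]
For $j\ge 1$ this is congruent to $2^j \pmod{2^{j+1}}$, because $2r+b$ is odd and $2j\ge j+1$. So exactly one of $r$ and $r+2^j$ is a root modulo $2^{j+1}$.
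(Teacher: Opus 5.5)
Your proof is correct and takes the same approach as the paper: the base case $m=1$ uses $f(x)\equiv x(x+1)\pmod 2$, and the induction lifts roots by Hensel's lemma, which applies because $f'(x)=2x+b$ is always odd. You are more careful than the paper, which leaves implicit the reduction step showing that every root modulo $2^{j+1}$ comes from a root modulo $2^j$ (the step that justifies ``exactly two''). Your direct computation $f(r+2^j)-f(r)\equiv 2^j \pmod{2^{j+1}}$ is also a valid, self-contained substitute for Hensel's lemma.
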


\begin{proof}
Since $b$ is an odd integer and $c$ is an even integer, the polynomial satisfies $f(x) \equiv x^2 + x \equiv 0 \pmod 2$. This congruence equation has two distinct roots, $0$ and $1$, modulo $2$. Hence, the lemma is true for $m=1$.

The derivative $f'(x) = 2x + b \equiv b \equiv 1 \not\equiv 0\pmod 2$ for all integers $x$. Therefore, Hensel's lemma (Lemma~\ref{hensel}) guarantees that both roots lift uniquely to exactly two distinct roots modulo $2^m$ for any $m \geq 2$. Hence, there exist two distinct roots $n_1, n_2 \in \{0, 1, \dots, 2^m-1\}$ such that $f(n_1) \equiv 0 \pmod{2^m}$ and $f(n_2) \equiv 0 \pmod{2^m}$ for $m \geq 2$.
\end{proof}

\begin{lemma}\label{lemma2}
Let $n_1, n_2$ be the two distinct roots $\in \{0, 1, \dots, 2^m-1\}$ for the congruence equation $f(n) \equiv 0 \pmod{2^m}$ such that $n_1 < n_2$, then:
\[
f(n_1) < 2^{2m+1}.
\]
\end{lemma}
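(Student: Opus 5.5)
Here $m = \lfloor \log_2(\max\{b,c\}) \rfloor$, so $2^m \le \max\{b,c\} < 2^{m+1}$; in particular $b < 2^{m+1}$ and $c < 2^{m+1}$. The plan is to locate $n_1$ inside $\{0,\dots,2^m-1\}$ and then bound $f(n_1)$ by direct estimation. The two roots of $f \equiv 0 \pmod{2^m}$ given by Lemma~\ref{lemma1} are related by the sum of roots. Since $f(x) = (x-n_1)(x+n_1+b) + f(n_1)$, and $f(n_1)\equiv 0 \pmod{2^m}$, the other root modulo $2^m$ is $-n_1-b$. Hence $n_1+n_2 \equiv -b \pmod{2^m}$.

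Because $0 \le n_1 < n_2 \le 2^m-1$, the sum $n_1+n_2$ lies in $[1, 2^{m+1}-3]$. Therefore $n_1+n_2$ equals either $2^m - (b \bmod 2^m)$ or $2^{m+1} - (b \bmod 2^m)$; in both cases $n_1+n_2 < 2^{m+1}$. Since $n_1<n_2$, we get $n_1 < 2^m$, which is already clear. The more useful consequence is $2n_1 < n_1+n_2 \le 2^{m+1}-1$, so $n_1 \le 2^m - 1$. This alone gives only
\[
f(n_1) \le (2^m-1)^2 + b(2^m-1) + c < 2^{2m} + 2^{m+1}\cdot 2^m + 2^{m+1} ,
\]
which is about $3\cdot 2^{2m}$ and exceeds $2^{2m+1}$.

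So the plan needs a sharper handle on $n_1$, and this is the main obstacle. I would exploit the relation $n_1+n_2\equiv -b$ more carefully, splitting into cases on the size of $b$ relative to $2^m$.

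\textbf{Case $b \ge 2^m$.} Write $b = 2^m + r$ with $0<r<2^m$, where $r$ is odd. Then $n_1+n_2 \in \{2^m - r,\ 2^{m+1}-r\}$, and $n_1 < (n_1+n_2)/2 \le 2^m - r/2$. I would then bound $f(n_1) = n_1(n_1+b) + c$. Using $n_1 < 2^m - r/2$ and $n_1 + b < 2^{m+1} + r/2$, the product is less than $(2^m - r/2)(2^{m+1}+r/2) \le 2^{2m+1} - r\,2^{m-1}$ roughly. The remaining task is to absorb $c < 2^{m+1}$ into the slack $r\,2^{m-1}$. This works when $r \ge 4$, but small $r$ needs care, presumably through integrality or the fact that $n_1 \le (n_1+n_2-1)/2$.

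\textbf{Case $b < 2^m$.} Here $c \ge 2^m$ may be large. We have $n_1 < (2^{m+1}-b)/2 = 2^m - b/2$, so $n_1(n_1+b) < (2^m-b/2)(2^m+b/2) = 2^{2m} - b^2/4$, and adding $c < 2^{m+1}$ gives well under $2^{2m+1}$ for $m\ge 2$.

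Small values of $m$ are to be checked by hand. In the first case I expect to need the exact integer bound $n_1 \le (2^{m+1} - r - 1)/2$, since $n_1 < n_2$ are integers, together with $c \le 2^{m+1}-2$ (as $c$ is even), to close the gap when $r$ is small.
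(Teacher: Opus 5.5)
Your proposal is essentially the paper's proof: you derive $n_1+n_2\equiv -b \pmod{2^m}$, deduce $n_1\le 2^m-\frac{r+1}{2}$ with $r=b \bmod 2^m$, and bound $f(n_1)$ using $b<2^{m+1}$ and $c\le 2^{m+1}-2$, with your split into $b<2^m$ and $b\ge 2^m$ playing the role of the paper's argument that the bound decreases in $r$ once $q\le 1$. The step you left open closes exactly as you anticipated, for every $m\ge 1$ and with no small-$m$ checks: for $b=2^m+r$ you get $f(n_1)\le \bigl(2^m-\frac{r+1}{2}\bigr)\bigl(2^{m+1}+\frac{r-1}{2}\bigr)+c = 2^{2m+1}-2^{m-1}(r+3)-\frac{r^2-1}{4}+c\le 2^{2m+1}-2$, where $r=1$ is the extremal case and matches the paper's bound.
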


\begin{proof}
Let $b = q \cdot 2^m + b_0$, where $q$ is a non-negative integer and $0 \leq b_0 < 2^m$. Note that since $b$ is odd, $b_0$ is odd. As a result, $b_0 \geq 1$.

We first show that 
\[
n_1 \leq 2^m - \frac{b_0+1}{2}.
\]

Since $n_1$ and $n_2$ are both roots of the congruence equation $f(n) \equiv 0 \pmod{2^m}$, we have $n_1^2 + bn_1 + c \equiv 0 \pmod{2^m}$ and $n_2^2 + bn_2 + c \equiv 0 \pmod{2^m}$. Subtracting these two congruences yields $(n_1^2 - n_2^2) + b(n_1 - n_2) \equiv 0 \pmod{2^m}$, which can be further simplified as:
\[
(n_1 - n_2)(n_1 + n_2 + b) \equiv 0 \pmod{2^m}.
\]

In the proof of Lemma~\ref{lemma1}, we show that the roots of $f(n) \equiv 0 \pmod 2$ are $0$ and $1$, where one of them is odd and the other one is even. Note that in the proof of Lemma~\ref{lemma1}, the lifting of roots by Hensel's lemma (Lemma~\ref{hensel}) does not change the parity of the roots. This implies that one of the lifted roots $n_1, n_2$ must be even and the other must be odd. Consequently, their difference $n_1 - n_2$ is an odd integer, and is therefore coprime to $2^m$. Thus,
\[
n_1 + n_2 + b \equiv 0 \pmod{2^m} \implies n_1 + n_2 \equiv -b \pmod{2^m}.
\]
Substituting the expansion $b = q \cdot 2^m + b_0$ into the congruence yields:
\begin{equation}\label{eqn}
    n_1 + n_2 \equiv -b_0 \equiv 2^m - b_0 \pmod{2^m}.
\end{equation}

Since $0 \leq n_1 < n_2 \leq 2^m - 1$, we have $0 < n_1 + n_2 < 2^{m+1} - 1$. There are only two possible integer values of $n_1 + n_2$ that can satisfy the linear congruence condition: $2^m - b_0$ and $2^{m+1} - b_0$.

In both scenarios, $n_1 + n_2 \leq 2^{m+1} - b_0$, and so we have $n_1 < \frac{2^{m+1} - b_0}{2} = 2^m - \frac{b_0}{2}$. But since $b_0$ is odd, the inequality can be rewritten as
\[
n_1 \leq 2^m - \frac{b_0+1}{2}.
\]

Since $f(x) = x^2 + bx + c$ is strictly increasing for all $x \geq 0$, the maximum possible value of $f(n_1)$ is $f(2^m - \frac{b_0+1}{2})$. Substituting $b = q \cdot 2^m + b_0$ into $f(2^m - \frac{b_0+1}{2})$, we define the function $G(b_0)$ as:
\[
G(b_0) = \left(2^m - \frac{b_0 + 1}{2}\right)^2 + (q \cdot 2^m + b_0)\left(2^m - \frac{b_0 + 1}{2}\right) + c.
\]
We take its first derivative with respect to the variable $b_0$:
\[
\frac{dG}{db_0} = -\frac{q \cdot 2^m + b_0}{2}
\]
Since $q \geq 0$ and $b_0 \geq 1$, this derivative is strictly negative ($\frac{dG}{db_0} < 0$). So the function $G(b_0)$ is strictly decreasing. Consequently, the absolute maximum value of $G(b_0)$ must occur at $b_0 = 1$, so
\[
f(n_1) \leq f(2^m - \frac{b_0+1}{2}) \leq \left(2^m - 1\right)^2 + (q \cdot 2^m + 1)\left(2^m - 1\right) + c .
\]

By definition, $m = \lfloor \log_2(\max\{b, c\}) \rfloor$, which implies $b, c < 2^{m+1}$. Since $b = q \cdot 2^m + b_0 < 2^{m+1}$, we have $q \leq 1$. Moreover, because $c$ is an even integer strictly less than $2^{m+1}$, we have $c \leq 2^{m+1} - 2$. Using the upper bounds $q \leq 1$ and $c \leq 2^{m+1} - 2$, we conclude that:
\[
f(n_1) \leq 2^{2m+1} - 2 < 2^{2m+1}.
\]
\end{proof}

We then prove our main result.

\begin{theorem}
Let $b$,$c$ be positive integers such that $b$ is odd and $c$ is even. Then there is an integer $n$ with $1 < n \leq \max\{b,c\}$ such that $n^2 + bn +c$ is practical.
\end{theorem}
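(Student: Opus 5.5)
The plan is to take $m=\lfloor \log_2(\max\{b,c\})\rfloor$ as in the paper and let $n_1<n_2$ be the two roots of $f(n)\equiv 0 \pmod{2^m}$ in $\{0,\dots,2^m-1\}$ given by Lemma~\ref{lemma1}. The main candidate is $n=n_1$.

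Since $f(n_1)\equiv 0\pmod{2^m}$, I write $f(n_1)=2^m k$ with $k$ a positive integer. Lemma~\ref{lemma2} gives $f(n_1)<2^{2m+1}$, so $k<2^{m+1}=2\cdot 2^m$. Every power of $2$ is practical, because every integer up to $2^{m+1}-1$ is a sum of distinct powers $2^0,\dots,2^m$. Lemma~\ref{practical_numbers2} applied with the practical number $2^m$ and multiplier $k\le 2\cdot 2^m$ then shows that $f(n_1)$ is practical. For the range condition, $2^m\le \max\{b,c\}$ by the definition of $m$, so $n_1\le 2^m-1<\max\{b,c\}$. The only remaining requirement is $n_1>1$.

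The main obstacle is the degenerate case $n_1\in\{0,1\}$. In that case I would switch to $n=n_2$. From the proof of Lemma~\ref{lemma2}, $n_1+n_2\in\{2^m-b_0,\ 2^{m+1}-b_0\}$, where $b=q2^m+b_0$. If $n_1\le 1$, the second value would force $n_2\ge 2^{m+1}-b_0-1\ge 2^m$, which is impossible. Hence $n_2=2^m-b_0-n_1\le 2^m-b_0$.

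I then repeat the estimate of Lemma~\ref{lemma2} for $n_2$. Using monotonicity of $f$ and $q\le 1$, $c\le 2^{m+1}-2$, we get
\[
f(n_2)\le (2^m-b_0)^2+(q2^m+b_0)(2^m-b_0)+c=(1+q)2^m(2^m-b_0)+c<2^{2m+1}.
\]
So again $f(n_2)=2^m k$ with $k<2^{m+1}$, and $f(n_2)$ is practical by Lemma~\ref{practical_numbers2}. Also $n_2\le 2^m-1\le\max\{b,c\}$.

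What remains, and what I expect to be the delicate part, is the possibility that $n_2\le 1$ as well. Since $n_2>n_1\ge 0$, this means $n_1=0$, $n_2=1$, and then $b_0=2^m-1$ and $2^m\mid c$. My first attempt here would be a direct check. Since $b\ge b_0=2^m-1$ and $c\ge 2^m$, $\max\{b,c\}$ is large enough, once $m\ge 2$, to allow the candidate $n=2^m$ or $n=2^m+1$ (roots of $f\equiv 0 \pmod{2^m}$ of the right parity). For such $n$ I would bound $f(n)/2^m$ directly and compare it with $2\cdot 2^m$, or apply Lemma~\ref{practical_numbers2} with the base $2^{m-1}$ or $2^{m+1}$ as convenient. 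The small cases $m\le 1$, where $\max\{b,c\}\le 3$, I would verify by hand; for example $b=1$, $c=2$ gives $f(2)=8$, which is practical.
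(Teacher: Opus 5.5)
Your treatment of the cases $n_1>1$ and $n_1\le 1<n_2$ matches the paper's argument, including the observation that $n_1\le 1$ forces $n_1+n_2=2^m-b_0$, and it is correct. The gap is in the last case, $n_1=0$, $n_2=1$. There you give a menu of possible strategies rather than an argument, and the strategy you list first does not work.

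The missing step is to use $\max\{b,c\}<2^{m+1}$ to determine $b$ and $c$ completely. From $2^m\mid c$ and $0<c<2^{m+1}$ you get $c=2^m$. From $b\equiv -1\pmod{2^m}$ and $0<b<2^{m+1}$ you get $b\in\{2^m-1,\,2^{m+1}-1\}$. Then $n=2^m$ is always admissible. Your alternative $2^m+1$ is not: when $b=2^m-1$ and $c=2^m$, you have $\max\{b,c\}=2^m$. Finally, $f(2^m)=2^{2m}+(b+1)2^m$ equals $2^{2m+1}$ or $3\cdot 2^{2m}$, and both are practical.

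In the subcase $b=2^{m+1}-1$ you get $f(2^m)/2^m=3\cdot 2^m>2\cdot 2^m$. So ``compare $f(n)/2^m$ with $2\cdot 2^m$'' fails there, and the base $2^{m-1}$ fails as well. You need the base $2^{2m}$ (as in the paper) or $2^{m+1}$. The divisibility $2^{2m}\mid f(2^m)$ that allows this only becomes visible once you know $c=2^m$, together with $2^m\mid b+1$.

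No separate small-$m$ check is needed. Since $c\ge 2$, you always have $m\ge 1$. The case $m=1$ always lands in this last case, with $(b,c)\in\{(1,2),(3,2)\}$, giving $f(2)=8$ and $f(2)=12$ respectively. Your hand check covered only the first of these.
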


\begin{proof}
Let $f(n) = n^2 + bn +c$. We define $n_1, n_2$ as in Lemma~\ref{lemma2}.

\vskip 5pt\noindent {\tt Case 1:} $n_1 > 1$.

We claim that $n_1$ is an integer which satisfies the criteria of the theorem. Note that $n_1 \leq 2^m - 1 < 2^{\lfloor \log_2(\max\{b, c\}) \rfloor} \leq \max\{b,c\}$. Since $f(n_1) \equiv 0 \pmod{2^m}$ and $f(n_1) < 2^{2m+1}$ (By Lemma~\ref{lemma2}), $f(n_1)$ can be expressed in the form $k2^m$, where $k$ is a positive integer such that $k < \frac{2^{2m+1}}{2^m}=2^{m+1}$. Since $2^m$ is practical and $k \le 2(2^m)$, by Lemma~\ref{practical_numbers2}, $f(n_1)=k2^m$ is also practical. This completes the proof for this case.

\vskip 5pt\noindent {\tt Case 2:} $n_1 \leq 1$ and $n_2>1$. 

We define $q$ and $b_0$ as in the proof of Lemma~\ref{lemma2}. We claim that $n_2$ is an integer which satisfies the criteria of the theorem. Since $n_1 \le 1$ and $n_2 \le 2^m - 1$, their sum satisfies $1 \le n_1 + n_2 \le 2^m$. The only value in this range satisfying the congruence condition $n_1 + n_2 \equiv -b_0 \pmod{2^m}$ (Equation~\ref{eqn}) is $n_1 + n_2 = 2^m - b_0$. Since $n_1 \ge 0$, we have $n_2 \le 2^m - b_0 < \max\{b,c\}$. Moreover, since $f(x) = x^2 + bx + c$ is strictly increasing for all $x \geq 0$, we have $f(n_2) \leq f(2^m - b_0)$. Substituting $b = q \cdot 2^m + b_0$:
\[
f(n_2) \le (2^m - b_0)^2 + (q \cdot 2^m + b_0)(2^m - b_0) + c.
\]
Expanding and simplifying the right-hand side gives:
\[
f(n_2) \le 2^{2m} - b_0 \cdot 2^m + q \cdot 2^{2m} - q \cdot b_0 \cdot 2^m + c = 2^m[2^m(1+q) - b_0(1+q)] + c.
\]
By our definition of $m$, we have $\max\{b,c\} < 2^{m+1}$, so $q \le 1$ and $c \le 2^{m+1} - 2$. Together with $b_0 \geq 1$, we have:
\[
f(n_2) \le 2^m(2 \cdot 2^m - 2) + 2^{m+1} - 2 = 2^{2m+1} - 2^m \cdot 2 + 2^{m+1} - 2 = 2^{2m+1} - 2 < 2^{2m+1}.
\]
Thus, $f(n_2)$ can be expressed in the form $k2^m$ where $k < 2^{m+1}$. By Lemma~\ref{practical_numbers2}, since $2^m$ is practical and $k \le 2(2^m)$, $f(n_2)=k2^m$ is a practical number. This completes the proof for this case.

\vskip 5pt\noindent {\tt Case 3:} $n_1 = 0$ and $n_2 = 1$.

Since $n_1 = 0$ and $n_2 = 1$ are roots modulo $2^m$, we have $f(0) = c \equiv 0 \pmod{2^m}$ and $f(1) = 1 + b + c \equiv 0 \pmod{2^m}$. The first congruence implies that $c$ is a multiple of $2^m$. The second congruence implies that $b \equiv -1 \pmod{2^m}$, which means $b = r \cdot 2^m - 1$ for some positive integer $r$. 

By the definition of $m$, we have $\max\{b, c\} < 2^{m+1}$. Since $b$ and $c$ are positive integers, the inequality $c < 2^{m+1}$ implies that $c=2^m$. Similarly, the inequality $b < 2^{m+1}$ implies that $r$ is either $1$ or $2$. We further analyze both subcases:
\begin{itemize}
    \item \vskip 5pt\noindent {\tt Subcase 3a:} $b = 2^m - 1$ and $c = 2^m$. Here, $\max\{b, c\} = 2^m$. We choose $n = 2^m \le \max\{b, c\}$. Substituting this into the polynomial yields:
    \[
    f(2^m) = (2^m)^2 + (2^m - 1)2^m + 2^m = 2 \cdot 2^{2m} = 2^{2m+1}.
    \]
    $2^{2m+1}$ is always a practical number.
    
    \item \vskip 5pt\noindent {\tt Subcase 3b:} $b = 2^{m+1} - 1$ and $c = 2^m$. We have $\max\{b, c\} = 2^{m+1} - 1$. We choose $n = 2^m \le \max\{b, c\}$. Substituting this into the polynomial yields:
    \[
    f(2^m) = (2^m)^2 + (2^{m+1} - 1)2^m + 2^m = 3 \cdot 2^{2m}.
    \]
    By Lemma~\ref{practical_numbers2}, since $2^{2m}$ is a practical number and $1 \le 3 \le 2(2^{2m})$, the product $3 \cdot 2^{2m}$ is also a practical number.
\end{itemize}

Overall, there is an integer $n$ with $1 < n \leq \max\{b,c\}$ such that $n^2 + bn +c$ is practical in all the three cases.
\end{proof}

In this paper, we have provided a short proof for the second part of a conjecture proposed by Wang and Sun \cite{Wang} regarding the quadratic representations of practical numbers. Combined with the work of Somu, Li, and Kukla \cite{Somu}, the conjecture is now fully resolved.

An interesting direction for future research would be to investigate whether similar localized bounds can be established for practical numbers represented by higher-degree polynomial forms.

\vskip20pt\noindent {\bf Acknowledgements.} The author thanks Sai Teja Somu and Boduen Wang for reviewing the manuscript and providing insightful suggestions.


\begin{thebibliography}{1}\footnotesize

\bibitem{Melfi} G. Melfi, On two conjectures about practical numbers, {\it J. Number Theory} {\bf 56} (1996), 205--210.

\bibitem{Niven} I. Niven, H. S. Zuckerman, and H. L. Montgomery, {\it An Introduction to the Theory of Numbers}, 5th ed., John Wiley \& Sons, New York, 1991.

\bibitem{Somu} S. T. Somu, T. H. S. Li, and A. Kukla, On some results on practical numbers, {\it Integers} {\bf 23} (2023), \#A68.

\bibitem{Srinivasan} A. K. Srinivasan, Practical numbers, {\it Current Sci.} {\bf 17} (1948), 179--180.

\bibitem{Stewart} B. M. Stewart, Sums of distinct divisors, {\it Amer. J. Math.} {\bf 76} (1954), 779--785.

\bibitem{Wang} L.-Y. Wang and Z.-W. Sun, On practical numbers of some special forms, {\it Houston J. Math.} {\bf 48} (2022), 241--247.

\end{thebibliography}
\end{document}